\newtheorem{definition}{Definition}
\newtheorem{lemma}{Lemma}
\newtheorem{corollary}{Corollary}
\newtheorem{theorem}{Theorem}
\newtheorem*{conjecture*}{Conjecture}
\newcommand{\bw}{{\mathbf w}}
\newcommand{\bq}{{\mathbf  q}}
\newcommand{\be}{{\mathbf e}}
\newcommand{\bc}{{\mathbf{c}}}
\newcommand{\prob}[1]{\mathbb{P} \left( {#1}\right)}
\newcommand{\bE}{\mathbb{E}}
\newcommand{\reals}{\mathbb{R}}
\newcommand{\bN}{\mathbb{N}}
\newcommand{\bP}{\mathbb{P}}
\newcommand{\cA}{\mathcal{A}}
\newcommand{\cF}{\mathcal{F}}
\newcommand{\cI}{\mathcal{I}}
\newcommand{\cJ}{\mathcal{J}}
\newcommand{\cO}{\mathcal{O}}
\newcommand{\cP}{\mathcal{P}}
\newcommand{\cU}{\mathcal{U}}
\renewenvironment{proof}{\par\noindent{\bf Proof\ }}{\hfill\BlackBox\\[2mm]}
\newcommand{\BlackBox}{\rule{1.5ex}{1.5ex}}
\newcommand{\RNum}[1]{\uppercase\expandafter{\romannumeral #1\relax}}
\def\moverlay{\mathpalette\mov@rlay}
\def\mov@rlay#1#2{\leavevmode\vtop{%
   \baselineskip\z@skip \lineskiplimit-\maxdimen
   \ialign{\hfil$\m@th#1##$\hfil\cr#2\crcr}}}
\newcommand{\charfusion}[3][\mathord]{
    #1{\ifx#1\mathop\vphantom{#2}\fi
        \mathpalette\mov@rlay{#2\cr#3}
      }
    \ifx#1\mathop\expandafter\displaylimits\fi}
\DeclareMathOperator*{\argmin}{argmin} 
\renewcommand{\eqref}[1]{Equation~(\ref{#1})}
\newcommand{\ineqref}[1]{Inequality~(\ref{#1})}
\newcommand{\thmref}[1]{Theorem~\ref{#1}}
\newcommand{\lemref}[1]{Lemma~\ref{#1}}
\newcommand{\coursename}{(67577) INTRODUCTION TO MACHINE LEARNING}
\newcommand{\handout}[5]{
   \renewcommand{\thepage}{#1-\arabic{page}}
   \noindent
   \begin{center}
   \framebox{
      \vbox{
    \hbox to 5.78in { {\bf \coursename}
         \hfill #2 }
       \vspace{4mm}
       \hbox to 5.78in { {\Large \hfill #5  \hfill} }
       \vspace{2mm}
       \hbox to 5.78in { {\it #3 \hfill #4} }
      }
   }
   \end{center}
   \vspace*{4mm}
}
\newcommand{\norm}[1]{\left\Vert#1\right\Vert}
\newcommand{\circpar}[1]{\left( #1 \right)}
\newcommand{\dom}{\text{dom}}
\newcommand{\mymat}[1]{\circpar{\begin{array}{cccccccccccccccccc} #1 \end{array}}}
\newcommand{\bigO}[1]{\mathcal{O}{\left(#1\right)}}
\newcommand{\bigtO}[1]{\tilde{\mathcal{O}}{\left(#1\right)}}
\title{Limitations on Variance-Reduction and Acceleration Schemes for Finite Sum Optimization}
\author{
Yossi Arjevani \\
Department of Computer Science and Applied Mathematics\\
Weizmann Institute of Science\\
Rehovot 7610001, Israel \\
\texttt{yossi.arjevani@weizmann.ac.il} \\
}	
\date{}
\begin{document}

\maketitle

\begin{abstract}
We study the conditions under which one is able to efficiently apply variance-reduction and acceleration schemes on finite sum optimization 
problems. First, we show that, perhaps surprisingly, the finite sum structure by itself, is not sufficient for obtaining a complexity bound of $\tilde{\cO}((n+L/\mu)\ln(1/\epsilon))$ for $L$-smooth and $\mu$-strongly convex individual functions - one must also know which individual function is being referred to by the oracle at each iteration. Next, we show that for a broad class of first-order and coordinate-descent finite sum algorithms (including, e.g., SDCA, SVRG, SAG), it is not possible to get an `accelerated' complexity bound of $\tilde{\cO}((n+\sqrt{n L/\mu})\ln(1/\epsilon))$, unless the strong convexity parameter is given explicitly. Lastly, we show that when this class of algorithms is used for minimizing $L$-smooth and convex finite sums, the iteration complexity is bounded from below by ${\Omega}(n+L/\epsilon)$, assuming that (on average) the same update rule is used in any iteration, and ${\Omega}(n+\sqrt{nL/\epsilon})$ otherwise.
\end{abstract}

\section{Introduction}
An optimization problem principal to machine learning and statistics is that of finite sums:
\begin{align} \label{opt:fsm}
\min_{\bw\in\reals^d} F(\bw) \coloneqq \frac1n\sum_{i=1}^n f_i(\bw),
\end{align}
where the individual functions $f_i$ are assumed to possess some favorable analytical properties, such as Lipschitz-continuity, smoothness or strong convexity (see \cite{nesterov2004introductory} for details). We measure the \emph{iteration complexity} of a given optimization algorithm by determining how many evaluations of individual functions (via some external oracle procedure, along with their gradient, Hessian, etc.) are needed  in order to obtain an $\epsilon$-solution, i.e., a point $\bw\in\reals^d$ which satisfies $\bE[F(\bw)-\min_{\bw\in\reals^d} F(\bw)]<\epsilon$ (where the expectation is taken w.r.t. the algorithm and the oracle randomness).


Arguably, the simplest way of minimizing finite sum problems is by using optimization algorithms for general optimization problems. For concreteness of the following discussion, let us assume for the moment that the individual functions are $L$-smooth and $\mu$-strongly convex. In this case, by applying vanilla Gradient Descent (GD) or Accelerated Gradient Descent (AGD, \cite{nesterov2004introductory}), one obtains iteration complexity of
\begin{align} \label{opt:gd_agd}
	\bigtO{n\kappa\ln(1/\epsilon)} \text{ or } \bigtO{n\sqrt\kappa\ln(1/\epsilon)},
\end{align} 
respectively, where $\kappa\coloneqq L/\mu$ denotes the \emph{condition number} of the problem and $\tilde{\cO}$ hides logarithmic factors in the problem parameters. However, whereas such bounds enjoy logarithmic dependence on the accuracy level, the multiplicative dependence on $n$ renders this approach unsuitable for modern applications where $n$ is very large. 

A different approach to tackle a finite sum problem is by reformulating it as a stochastic optimization problem, i.e., 
$	\min_{\bw\in\reals^d} \bE_{i\sim\cU([n])}[f_i(\bw)]$,
and then applying a general stochastic method, such as SGD, which allows iteration complexity of $\bigO{1/\epsilon}$ or $\bigO{1/\epsilon^2}$ (depending on the problem parameters). These methods offer rates which do not depend on $n$, and are therefore attractive for situations where one seeks for a solution of relatively low accuracy. An evident drawback of these  methods is their broad applicability for stochastic optimization problems, which may conflict with the goal of efficiently exploiting the unique noise structure of finite sums 
(indeed, in the general stochastic setting, these rates cannot be improved, e.g., \cite{agarwal2009information,raginsky2011information}).  

In recent years, a major breakthrough was made when stochastic methods specialized in finite sums (first SAG \cite{schmidt2013minimizing} and SDCA \cite{shalev2013stochastic}, and then SAGA \cite{defazio2014saga}, SVRG \cite{johnson2013accelerating}, SDCA without duality \cite{shalev2015sdca}, and others) were shown to obtain iteration complexity of
\begin{align} \label{rates:optimal}
\bigtO{(n+\kappa)\ln(1/\epsilon)}.
\end{align}
The ability of these algorithms to enjoy both logarithmic dependence on the accuracy parameter and an additive dependence on~$n$ is widely attributed to the fact that the noise of finite sum problems distributes over a finite set of size $n$. Perhaps surprisingly, in this paper we show that another key ingredient is crucial, namely, a mean of knowing which individual function is being referred to by the oracle at each iteration. 
In particular, this shows that variance-reduction mechanisms (see, e.g., \cite[Section 3]{defazio2014saga}) cannot be applied without explicitly knowing the `identity' of the individual functions. On the more practical side, this result shows that when data augmentation (e.g., \cite{loosli2007training}) is done without an explicit enumeration of the added samples, it is impossible to obtain iteration complexity as stated in (\ref{rates:optimal}, see \cite{bietti2016stochastic} for relevant upper bounds).

Although variance-reduction mechanisms are essential for obtaining an additive dependence on $n$ (as shown in (\ref{rates:optimal})), they do not necessarily yield `accelerated' rates which depend on the square root of the condition number (as shown in (\ref{opt:gd_agd}) for AGD). Recently, generic acceleration schemes were used by \cite{lin2015universal} and accelerated SDCA \cite{shalev2016accelerated} to obtain iteration complexity of 
\begin{align} \label{optimal_rate}
	\bigtO{(n+\sqrt{nk})\ln(1/\epsilon)}. 
\end{align} 
The question of whether this rate is optimal was answered affirmatively by \cite{woodworth2016tight, lan2015optimal,arjevani2016oracle,arjevani2016dimension}. The first category of lower bounds exploits the degree of freedom offered by a $d$- (or an infinite-) dimensional space to show that any first-order and a certain class of second-order methods cannot obtain better rates than (\ref{optimal_rate}) in the regime where the number of iterations is less than  $\bigO{d/n}$. The second category of lower bounds is based on maintaining the complexity 
of the functional form of the iterates, thereby establishing bounds for first-order and coordinate-descent algorithms whose step sizes are \emph{oblivious} to the problem parameters (e.g., SAG, SAGA, SVRG, SDCA, SDCA without duality) for \emph{any} number of iterations, regardless of $d$ and $n$. 

In this work, we further extend the theory of oblivious finite sum algorithms, by showing that if a first-order and a coordinate-descent oracle are used, then acceleration is not possible without an explicit knowledge of the strong convexity parameter. This implies that in cases where only poor estimation of the strong convexity is available, faster rates may be obtained through `adaptive' algorithms (see relevant discussions in \cite{schmidt2013minimizing,DBLP:conf/icml/ArjevaniS16}). 

Next, we show that in the smooth and convex case, oblivious finite sum algorithms which, on average, apply the same update rule at each iteration (e.g., SAG, SDCA, SVRG, SVRG++ \cite{allen2016improved}, and typically, other algorithms with a variance-reduction mechanism as described in \cite[Section 3]{defazio2014saga}), are bound to iteration complexity of $\Omega(n+L/\epsilon)$, where $L$ denotes the smoothness parameter (rather than $\Omega(n+\sqrt{n L/\epsilon})$). To show this, we employ a restarting scheme (see \cite{DBLP:conf/icml/ArjevaniS16}) which explicitly introduces the strong convexity parameter into algorithms that are designed for smooth and convex functions. Finally, 
we use this scheme to establish a tight dimension-free lower bound for smooth and convex finite sums which holds for oblivious algorithms with a first-order and a coordinate-descent oracle.

To summarize, our contributions (in order of appearance) are the following:
\begin{itemize}
	\item In Section \ref{subsection:no_indices}, we prove that in the setting of stochastic optimization, having finitely supported noise (as in finite sum problems) is not sufficient for obtaining linear convergence rates with a linear dependence on $n$ - one must also know exactly which individual function is being referred to by the oracle at each iteration. Deriving similar results for various settings, we show that SDCA, accelerated SDCA, SAG, SAGA, SVRG, SVRG++ and other finite sum algorithms must have a proper enumeration of the individual functions in order to obtain their stated convergence rate.

\item In Section \ref{section:framework}, we lay the foundations of the framework of general CLI algorithms (see \cite{arjevani2016dimension}), which enables us to formally address oblivious algorithms (e.g., when step sizes are scheduled regardless of the function at hand). In section \ref{section:no_acceleration}, we improve upon \cite{DBLP:conf/icml/ArjevaniS16}, by showing that (in this generalized framework) the optimal iteration complexity of oblivious, deterministic or stochastic, finite sum algorithms with \emph{both} first-order and coordinate-descent oracles cannot perform better than  $\Omega(n + \kappa\ln(1/\epsilon))$, unless the strong convexity parameter is provided explicitly. In particular, the richer expressiveness power of this framework allows addressing incremental gradient methods, such as Incremental Gradient Descent \cite{bertsekas1997new} and Incremental Aggregated Gradient \cite[IAG]{blatt2007convergent}.

\item In Section \ref{section:stationary}, we show that, in the $L$-smooth and convex case, the optimal complexity bound (in terms of the accuracy parameter) of oblivious algorithms whose update rules are (on average) fixed for any iteration is $\Omega(n+L/\epsilon)$ (rather then $\tilde{\cO}(n+\sqrt{nL/\epsilon})$, as obtained, e.g., by accelerated SDCA). To show this, we first invoke a restarting scheme (used by \cite{DBLP:conf/icml/ArjevaniS16}) to explicitly introduce strong convexity into algorithms for finite sums with smooth and convex individuals, and then apply the result derived in Section \ref{section:no_acceleration}. 

\item In Section \ref{section:smooth_lower_bounds}, we use the reduction introduced in Section \ref{section:stationary}, to 
show that the optimal iteration complexity of minimizing $L$-smooth and convex finite sums using oblivious algorithms equipped with a first-order and a coordinate-descent oracle is
$\Omega\circpar{n+\sqrt{{n L }/{\epsilon}}}.$

\end{itemize}
 

\section{The Importance of Individual Identity} \label{subsection:no_indices}
In the following, we address the stochastic setting of finite sum problems (\ref{opt:fsm}) where one is equipped with a \emph{stochastic} oracle which, upon receiving a call, returns some individual function chosen uniformly at random and hides its index. We show that not knowing the identity of the function returned by the oracle (as opposed to an \emph{incremental} oracle which addresses the specific individual functions chosen by the user), significantly harms the optimal attainable performance. To this end, we reduce the statistical problem of estimating the bias of a noisy coin 
into that of optimizing finite sums. This reduction (presented below) makes an extensive use of elementary definitions and tools from information theory, all of which can be found in \cite{cover2012elements}.

First, given $n\in\bN$, we define the following finite sum problem
\begin{align} \label{eq:no_indices}
F_\sigma \coloneqq \frac{1}{n}\circpar{\frac{n-\sigma}2 f^+ +\frac{n+\sigma}2 f^-},
\end{align}
where $n$ is w.l.o.g. assumed to be odd, $\sigma\in\{-1,1\}$ and $f^+,f^-$ are some functions (to be defined later). We then define the following discrepancy measure between $F_1$ and $F_{-1}$ for different values of~$n$ (see also \cite{agarwal2009information}),
\begin{align} \label{definition:delta}
	\delta(n) = \min_{\bw\in\reals^d}\{F_1(\bw) + F_{-1}(\bw) - F_{1}^*  - F_{-1}^*\},
\end{align}
where $F_\sigma^*\coloneqq\inf_{\bw} F_\sigma(\bw)$. It is easy to verify that no solution can be $\delta(n)/4$-optimal for both $F_1$ and $F_{-1}$, at the same time. Thus, by running a given optimization algorithm long enough to obtain $\delta(n)/4$-solution w.h.p., we can deduce the value of $\sigma$. Also, note that, one can simplify the computation of $\delta(n)$ by choosing convex $f^+,f^-$ such that $f^+(\bw)=f^-(-\bw)$. Indeed, in this case, we have $F_1(\bw)=F_{-1}(-\bw)$ (in particular, $F_1^* = F_{-1}^*$), and since $F_1(\bw) + F_{-1}(\bw) - F_{1}^*  - F_{-1}^*$ is convex, it must attain its minimum at $\bw=0$, which yields
\begin{align} \label{eq:delta_simple}
	\delta(n)&= 2(F_1(\mathbf{0}) - F_{1}^*).
\end{align}

Next, 
we let $\sigma\in\{-1,1\}$ be drawn uniformly at random, and then use the given optimization algorithm to estimate the bias of a random variable $X$ which, conditioned on $\sigma$, takes $+1$ w.p. $1/2+\sigma/2n$, and $-1$ w.p. $1/2-\sigma/2n$. To implement the stochastic oracle described above, conditioned on $\sigma$, we draw $k$ i.i.d. copies of $X$, denoted by $X_1,\dots,X_k$, and return $f^-$, if $X_i=\sigma$, and $f^+$, otherwise. Now, if $k$ is such that 
\begin{align*}
\bE[F_\sigma(\bw^{(k)})-F^*_\sigma \,|\,\sigma]\le \frac{\delta(n)}{40},
\end{align*}
for both $\sigma= -1$ and $\sigma= 1$, then by Markov inequality, we have that 
\begin{align} \label{ineq:no_indices_markov}
\bP\left(F_\sigma(\bw^{(k)})-F^*_\sigma  \ge \delta(n)/4\,\middle| \,\sigma\right)\le 1/10
\end{align}
(note that $F_\sigma(\bw^{(k)})-F^*_\sigma$ is a non-negative random variable). We may now try to guess the value of $\sigma$ using the following estimator
$$\hat\sigma(\bw^{(k)})=\argmin_{\sigma'\in\{-1,1\}} \{F_{\sigma'}(\bw^{(k)}) - F^*_{\sigma'}\},$$ 
whose probability of error, as follows by \ineqref{ineq:no_indices_markov}, is 
\begin{align} \label{eq:sigma_error_prob}
\prob{\hat\sigma\neq\sigma}\le 1/10.
\end{align}

Lastly, we show that the existence of an estimator for $\sigma$ with high probability of success implies that $k=\Omega(n^2)$. To this end, note that the corresponding conditional dependence structure of this probabilistic setting can be modeled as follows:
$\sigma \to X_1,\dots,X_k \to \hat\sigma$. Thus, we have
\begin{align} \label{sigma_entropy_chain}
H(\sigma\,|\,X_1,\dots,X_k) \stackrel{(a)}{\le} H(\sigma\,|\,\hat\sigma) \stackrel{(b)}{\le} H_b(\prob{\hat\sigma\neq\sigma}) \stackrel{(c)}{\le} \frac12,
\end{align}
where $H(\cdot)$ and $H_b(\cdot)$ denote the Shannon entropy function and the binary entropy function, respectively, $(a)$ follows by the data processing inequality (in terms of entropy), $(b)$ follows by Fano's inequality and $(c)$ follows from \eqref{eq:sigma_error_prob}. Applying standard entropy identities, we get
\begin{align}
H(\sigma\,|\,X_1,\dots,X_k) &\stackrel{(d)}{=} H(X_1,\dots,X_k\,|\,\sigma) + H(\sigma) - H(X_1,\dots,X_k) \nonumber\\
&\stackrel{(e)}{=} kH(X_1\,|\,\sigma) + 1 - H(X_1,\dots,X_k) \nonumber\\
&\stackrel{(f)}{\ge} kH(X_1\,|\,\sigma) + 1 - kH(X_1),
\end{align}
where $(d)$ follows from Bayes rule, $(e)$ follows by the fact that $X_i$, conditioned on $\sigma$, are i.i.d. and $(f)$ follows from the chain rule and the fact that conditioning reduces entropy. Combining this with \ineqref{sigma_entropy_chain} and rearranging, we have
\begin{align*}
k\ge\frac{1}{2(H(X_1)-H(X_1\,|\,\sigma)  )}\ge \frac{1}{2\circpar{1/n}^2}=\frac{n^2}{2},
\end{align*}
where the last inequality follows from the fact that $H(X_1)=1$ and the following estimation for the binary entropy function: 
$H_b(p)\ge 1-4\circpar{p-1/2}^2$ (see \lemref{lem:entropy_lower_bound},  Appendix \ref{section:technical_lemmas}). 
Thus, we arrive at the following statement.
\begin{lemma}
The minimal number of stochastic oracle calls required to obtain $\delta(n)/40$-optimal solution for problem (\ref{eq:no_indices}) is $\ge n^2/2$.
\end{lemma}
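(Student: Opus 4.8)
The plan is to carry out in full the reduction outlined above: any algorithm that returns a $\delta(n)/40$-optimal point for \eqref{eq:no_indices} (for both values of $\sigma$) can be turned into an estimator of the hidden sign $\sigma$, and an information-theoretic argument then shows that such an estimator forces $\Omega(n^2)$ oracle calls, since it amounts to deciding between two coins whose biases differ by only $\Theta(1/n)$. Concretely, I would fix convex individual functions $f^+,f^-$ with $f^+(\bw)=f^-(-\bw)$, so that $F_1(\bw)=F_{-1}(-\bw)$ and, by convexity, $\delta(n)=2(F_1(\mathbf{0})-F_1^*)$ as in \eqref{eq:delta_simple}; the only structural facts I need are that $\delta(n)>0$ and that $F_1(\bw)+F_{-1}(\bw)-F_1^*-F_{-1}^*\ge\delta(n)$ for all $\bw$, which already implies that no point can be simultaneously $\delta(n)/4$-optimal for $F_1$ and $F_{-1}$. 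Next, draw $\sigma\in\{-1,1\}$ uniformly and simulate the stochastic oracle by drawing, conditioned on $\sigma$, i.i.d.\ copies $X_1,\dots,X_k$ of a $\pm1$ variable with bias $\sigma/2n$ towards $+1$, and mapping each $X_i$ to $f^+$ or $f^-$ so that the induced distribution over the two functions matches their frequencies in $F_\sigma$; the sequence of returned functions is then informationally equivalent to $(X_1,\dots,X_k)$. If the algorithm achieves $\bE[F_\sigma(\bw^{(k)})-F_\sigma^*\mid\sigma]\le\delta(n)/40$ for both values of $\sigma$, Markov's inequality gives $\bP(F_\sigma(\bw^{(k)})-F_\sigma^*\ge\delta(n)/4\mid\sigma)\le1/10$, and then $\hat\sigma=\argmin_{\sigma'\in\{-1,1\}}\{F_{\sigma'}(\bw^{(k)})-F_{\sigma'}^*\}$ satisfies $\bP(\hat\sigma\neq\sigma)\le1/10$, because on the complementary event the separation property forces $\hat\sigma=\sigma$.

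Then I would run the information argument along the Markov chain $\sigma\to(X_1,\dots,X_k)\to\hat\sigma$. The data processing inequality (in terms of entropy) and Fano's inequality give $H(\sigma\mid X_1,\dots,X_k)\le H_b(\bP(\hat\sigma\neq\sigma))\le H_b(1/10)\le1/2$, whereas Bayes' rule together with the conditional i.i.d.\ structure of the $X_i$ and subadditivity of entropy gives $H(\sigma\mid X_1,\dots,X_k)\ge H(\sigma)+k(H(X_1\mid\sigma)-H(X_1))$. Since $\sigma$ is uniform, $H(\sigma)=1$ and $X_1$ is marginally unbiased so $H(X_1)=1$, while $X_1\mid\sigma$ is a coin with bias $1/2n$; the quadratic bound $H_b(p)\ge1-4(p-1/2)^2$ from \lemref{lem:entropy_lower_bound} then gives $H(X_1\mid\sigma)\ge1-1/n^2$, hence $H(X_1)-H(X_1\mid\sigma)\le1/n^2$. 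Combining the two estimates and rearranging yields $k\ge n^2/2$.

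The hard part is not any single estimate but keeping the chain of reductions honest: I must verify that the simulated oracle induces exactly the distribution under which the algorithm's convergence guarantee was stated (so that the $\delta(n)/40$ bound genuinely transfers to the coin problem), that the chosen $f^\pm$ really give $\delta(n)>0$ (otherwise $\hat\sigma$ is meaningless), and that the binary-entropy bound is applied to the conditional quantity $H(X_1\mid\sigma)$ rather than to a marginal. Once these are in place, the counting bound $k\ge n^2/2$ follows immediately.
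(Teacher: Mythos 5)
Your proof is correct and follows the paper's argument step by step: the reduction to estimating the hidden sign $\sigma$, Markov's inequality to get failure probability $\le 1/10$, the $\argmin$ estimator $\hat\sigma$, and the entropy chain (data processing, Fano, Bayes, subadditivity of entropy) combined with the quadratic binary-entropy bound $H_b(p)\ge 1-4(p-\tfrac12)^2$ to conclude $k\ge n^2/2$. Your phrasing of the oracle simulation (choosing the map from $X_i$ to $\{f^+,f^-\}$ so the induced distribution matches the frequencies in $F_\sigma$) is in fact slightly more careful than the paper's literal rule ``return $f^-$ if $X_i=\sigma$,'' which has a sign mismatch when $\sigma=-1$; the intended rule is ``return $f^-$ if $X_i=1$.''
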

Instantiating this schemes for $f^+,f^-$ of various analytical properties yields the following.
\begin{theorem} \label{thm:no_indices}
When solving a finite sum problem (defined in \ref{opt:fsm}) with a stochastic oracle, one needs at least $n^2/2$ oracle calls in order to obtain an accuracy level of:  
\begin{enumerate}
	\item $\frac{\kappa+1}{40n^2}$ for smooth and strongly convex individuals with condition $\kappa$.
	\item $\frac{L}{40n^2}$ for $L$-smooth and convex individuals.
	\item $\frac{M^2}{40\lambda n^2}$ if $\frac{M}{\lambda n}\le1$, and $\frac{M}{20n}-\frac{\lambda}{40}$, otherwise, for $(M+\lambda)$-Lipschitz continuous and $\lambda$-strongly convex individuals.
\end{enumerate}
\end{theorem}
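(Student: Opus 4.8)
The plan is to feed the reduction of the preceding lemma --- $\ge n^2/2$ stochastic-oracle calls are needed for a $\delta(n)/40$-optimal solution of \eqref{eq:no_indices} --- with three pairs $f^+,f^-$, one per item, each carrying the prescribed analytical properties together with the symmetry $f^+(\bw)=f^-(-\bw)$ that makes \eqref{eq:delta_simple} applicable. Then each claimed accuracy level is exactly $\delta(n)/40$, where $\delta(n)=2(F_1(\mathbf 0)-F_1^*)$ and $F_1$ is the quadratic $F_1(\bw)=\tfrac1n\bigl(\tfrac{n-1}2 f^+(\bw)+\tfrac{n+1}2 f^-(\bw)\bigr)$; so the whole argument reduces to a closed-form computation of $\delta(n)$ in the three cases.

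For item~2 I would take $d=1$ and $f^\pm(w)=\tfrac L2(w\mp1)^2$: each is $L$-smooth and convex, $f^+(w)=f^-(-w)$, and expanding $F_1$ gives $F_1(w)=\tfrac L2\bigl((w+\tfrac1n)^2+1-\tfrac1{n^2}\bigr)$, hence $F_1(\mathbf 0)-F_1^*=\tfrac{L}{2n^2}$ and $\delta(n)=\tfrac{L}{n^2}$, which is item~2. For item~1 a single one-dimensional quadratic will not do, since its condition number is always $1$; instead I would use $d=2$ with $f^\pm(\bw)=\tfrac L2(w_1\mp1)^2+\tfrac\mu2(w_2\mp1)^2$, each with constant Hessian $\mathrm{diag}(L,\mu)$ --- so $L$-smooth, $\mu$-strongly convex with condition number exactly $\kappa=L/\mu$ --- and still satisfying the symmetry. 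The coordinates decouple, so summing two copies of the item~2 computation (one with $L$, one with $\mu$) gives $F_1(\mathbf 0)-F_1^*=\tfrac{L+\mu}{2n^2}$, i.e.\ $\delta(n)=\tfrac{L+\mu}{n^2}$; under the usual normalization $\mu=1$ this is $\tfrac{\kappa+1}{n^2}$, which is item~1.

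Item~3 is the delicate case, because a globally $\lambda$-strongly convex function has unbounded gradient and hence cannot be globally Lipschitz. I would therefore pose \eqref{eq:no_indices} as a problem constrained to the unit ball $\{\|\bw\|\le1\}$ --- taking the minima in \eqref{definition:delta} and \eqref{eq:delta_simple} over that ball --- and set, in $d=1$, $f^\pm(w)=\tfrac\lambda2 w^2\pm Mw$. On $[-1,1]$ the gradient $\lambda w\pm M$ has absolute value at most $M+\lambda$, so each $f^\pm$ is there $(M+\lambda)$-Lipschitz and $\lambda$-strongly convex, with $f^+(w)=f^-(-w)$. Here $F_1(w)=\tfrac\lambda2 w^2-\tfrac Mn w=\tfrac\lambda2\bigl(w-\tfrac{M}{\lambda n}\bigr)^2-\tfrac{M^2}{2\lambda n^2}$ and $F_1(0)=0$, and the only issue is whether the unconstrained minimizer $w^*=\tfrac{M}{\lambda n}$ lies in $[-1,1]$: if $\tfrac{M}{\lambda n}\le1$ then $F_1^*=-\tfrac{M^2}{2\lambda n^2}$ and $\delta(n)=\tfrac{M^2}{\lambda n^2}$, while otherwise the constrained minimum is at $w=1$, giving $F_1^*=\tfrac\lambda2-\tfrac Mn$ and $\delta(n)=\tfrac{2M}{n}-\lambda$ --- these are exactly the two branches of item~3 after dividing by $40$. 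One should still verify that \eqref{eq:delta_simple} survives the constraint: $F_1(w)+F_{-1}(w)=\lambda w^2$ is minimized at $w=0\in[-1,1]$, so the convexity-plus-symmetry derivation of \eqref{eq:delta_simple} goes through verbatim with $\reals^d$ replaced by the ball.

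The genuine obstacle is thus item~3: one has to recognize that Lipschitz continuity forces a bounded domain, check that the information-theoretic reduction and the simplification \eqref{eq:delta_simple} remain valid under the constraint, and keep track of the boundary regime $M/\lambda n>1$ that produces the second branch. Items~1 and~2 are afterwards just routine quadratic bookkeeping.
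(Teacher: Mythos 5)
Your proposal is correct and follows essentially the same approach as the paper: all three items instantiate the preceding lemma with quadratic (or quadratic-plus-linear on the unit ball) choices of $f^\pm$ satisfying the symmetry $f^+(\bw)=f^-(-\bw)$, and then compute $\delta(n)$ via \eqref{eq:delta_simple}. Your constructions agree with the paper's up to trivial reparametrizations --- for item~1 the paper takes $A=\mathrm{Diag}(\kappa,1,\dots,1)$ and $\bq=(1,1,0,\dots,0)^\top$ (implicitly $\mu=1$, $L=\kappa$), and for item~3 the paper uses $f^\pm(\bw)=M\|\bw\pm\be_1\|+\tfrac{\lambda}{2}\|\bw\|^2$ on the unit ball, which on $d=1$ and $[-1,1]$ reduces to your $\tfrac{\lambda}{2}w^2\pm Mw$ up to an additive constant --- and you correctly identify and handle the bounded-domain subtlety that the paper only signals by the phrase ``over the unit ball.''
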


\newpage
\begin{proof}
\begin{enumerate}
	\item Define,
	\begin{align*}
		f^\pm(\bw)=\frac12 \circpar{\bw\pm\bq}^\top\! A\circpar{\bw\pm\bq},
	\end{align*}
where $A$ is a $d\times d$ diagonal matrix whose diagonal entries are	$\kappa,1\dots,1$, and $\bq=(1,1,0,\dots,0)^\top$ is a $d$-dimensional vector.
One can easily verify that $f^\pm$ are smooth and strongly convex functions with condition number $\kappa$, and that 
\begin{align*}
F_\sigma(\bw) 
&= \frac12\circpar{\bw - \frac{\sigma}{n}\bq}^\top A \circpar{\bw-\frac{\sigma}{n}\bq}
+ \frac12\circpar{1-\frac1{n^2}}\bq^\top A\bq.
\end{align*}
Therefore, the minimizer of $F_\sigma$ is $(\sigma/n)\bq$, and using \eqref{eq:delta_simple}, we see that
	$\delta(n) 
	=\frac{\kappa+1}{n^2}$.

\item We define  
\begin{align*}
	f^{\pm}(\bw)&= \frac{L}2\norm{\bw\pm\be_1}^2.
\end{align*}
One can easily verify that $f^\pm$ are $L$-smooth and convex functions, and that the minimizer of $F_\sigma$ is $(\sigma/n)\be_1$. By \eqref{eq:delta_simple}, we get
	$\delta(n)
	= \frac{L}{n^2}$.

\item We define
\begin{align*}
	f^{\pm}(\bw)&= M\|\bw\pm\be_1\| + \frac\lambda2\norm{\bw}^2,
\end{align*}
over the unit ball. Clearly, $f^{\pm}$ are $(M+\lambda)$-Lipschitz continuous and $\lambda$-strongly convex functions. It can be verified that the minimizer of $F_\sigma$ is $(\sigma\min\{\frac{M}{\lambda n},1\})\be_1$.
Therefore, by \eqref{eq:delta_simple}, we see that in this case we have
\begin{align*}
	\delta(n)
	= 
	\begin{cases}
	\frac{M^2}{\lambda n^2}& \frac{M}{\lambda n}\le1 \\
	\frac{2M}{n}-\lambda& \text{o.w.}
	\end{cases}.
\end{align*}

\end{enumerate}
\end{proof}
A few conclusions can be readily made from \thmref{thm:no_indices}. First, if a given optimization algorithm obtains an iteration complexity of an order of $c(n,\kappa)\ln(1/\epsilon)$, up to logarithmic factors (including the norm of the minimizer which, in our construction, is of an order of $1/n$ and coupled with the accuracy parameter), for solving smooth and strongly convex finite sum problems with a stochastic oracle, then
\begin{align*}
	c(n,\kappa)&= {\tilde\Omega\circpar{\frac{n^2}{\ln(n^2/(\kappa+1))}}}.
\end{align*} 
Thus, the following holds for optimization of finite sums with smooth and strongly convex individuals.
\begin{corollary}
In order to obtain linear convergence rate with linear dependence on $n$, one must know the index of the individual function addressed by the oracle.
\end{corollary}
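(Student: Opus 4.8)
The plan is to read the Corollary off as the contrapositive of part~1 of Theorem~\ref{thm:no_indices}, together with the estimate for $c(n,\kappa)$ derived in the paragraph just above it. Suppose, towards a contradiction, that there is an algorithm which, when supplied only with the stochastic (index-hiding) oracle of this section, attains iteration complexity $\tilde{\cO}\circpar{(n+\kappa)\ln(1/\epsilon)}$ on every finite sum with $L$-smooth and $\mu$-strongly convex individuals --- that is, linear convergence together with a linear dependence on $n$. Instantiate this algorithm on the hard family $\{F_\sigma\}$ from the proof of Theorem~\ref{thm:no_indices}(1), with the condition number $\kappa$ held fixed, so that $\delta(n)=(\kappa+1)/n^2$ and the minimizer $(\sigma/n)\bq$ has norm $O(1/n)$.

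Next I would plug the target accuracy $\epsilon=\delta(n)/40=(\kappa+1)/(40n^2)$ into the assumed bound. Since on these instances the only quantities hidden inside $\tilde{\cO}(\cdot)$ are $\kappa$ (constant) and $\|\bw^*\|=O(1/n)$, writing the complexity as $c(n,\kappa)\ln(1/\epsilon)$ up to factors polylogarithmic in $n$, the number of oracle calls the algorithm needs to reach $\delta(n)/40$-accuracy is $O\circpar{c(n,\kappa)\ln n}$ with $c(n,\kappa)=O(n+\kappa)$. On the other hand, the Lemma established just before Theorem~\ref{thm:no_indices} asserts that \emph{every} algorithm equipped with this stochastic oracle needs at least $n^2/2$ calls to produce a $\delta(n)/40$-optimal point for $F_\sigma$. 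Comparing the two gives $c(n,\kappa)\ln n=\Omega(n^2)$, hence $c(n,\kappa)=\tilde{\Omega}\circpar{n^2/\ln(n^2/(\kappa+1))}$, which is superlinear in $n$ and contradicts $c(n,\kappa)=O(n+\kappa)$ for all large $n$. Thus no index-hiding algorithm can enjoy linear convergence with a linear dependence on $n$, which is precisely the assertion of the Corollary.

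The single point that genuinely needs attention --- and which I would treat as the substance of the argument rather than a formality --- is the accounting of the suppressed logarithmic factors. The hard instances are engineered so that $\epsilon\sim 1/n^2$ and $\|\bw^*\|\sim 1/n$ are coupled to $n$; the superlinear-in-$n$ lower bound on $c(n,\kappa)$ survives only if the $\tilde{\cO}$ in the hypothesized upper bound conceals nothing worse than polylogarithmic factors in $n$ (and in the fixed parameter $\kappa$). If the class of algorithms under consideration were allowed to hide a $\poly(1/\epsilon)$ or $\poly(\|\bw^*\|^{-1})$ factor, the contradiction would evaporate, so the statement of the Corollary is implicitly made within the regime of bounds of the form $c(n,\kappa)\ln(1/\epsilon)$ up to such benign factors. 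Granting that, the remainder is the direct substitution above.
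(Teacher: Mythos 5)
Your argument is correct and is essentially the paper's own: the paper derives $c(n,\kappa)=\tilde\Omega\bigl(n^2/\ln(n^2/(\kappa+1))\bigr)$ by substituting $\epsilon=\delta(n)/40=(\kappa+1)/(40n^2)$ into an assumed bound of the form $c(n,\kappa)\ln(1/\epsilon)$ and comparing against the $n^2/2$ lower bound from the Lemma, and you reproduce exactly that computation in contrapositive form. Your closing remark on the suppressed logarithmic factors and the coupling of $\|\bw^*\|\sim 1/n$ to $\epsilon$ is the same caveat the paper flags parenthetically, so there is no gap.
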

This implies that variance-reduction methods such as, SAG, SAGA, SDCA and SVRG (possibly combining with acceleration schemes), which exhibit linear dependence on $n$, cannot be applied when data augmentation is used. In general, this conclusion also holds for cases when one applies general first-order optimization algorithms, such as AGD, on finite sums, as this typically results in a linear dependence on $n$. Secondly, if a given optimization algorithm obtains an iteration complexity of an order of 
$n + {L^\beta\|\bw^{(0)}-\bw^*\|^2}/{\epsilon^\alpha}$
for solving smooth and convex finite sum problems with a stochastic oracle, then $n+L^{\beta-\alpha} n^{2(\alpha-1)}  = \Omega(n^2)$. 
Therefore, $\beta=\alpha$ and $\alpha\ge2$, indicating that an iteration complexity of an order of $n+L\|\bw^{(0)}-\bw^*\|^2/\epsilon$, as obtained by, e.g., SVRG++, is not attainable with a stochastic oracle. Similar reasoning based on the Lipschitz and strongly convex case in \thmref{thm:no_indices} shows that the iteration complexity guaranteed by accelerated SDCA is also not attainable in this~setting.



												\section{Oblivious Optimization Algorithms} \label{section:oblivious}

In the previous section, we discussed different situations under which variance-reduction schemes are not applicable. Now, we turn to study under what conditions can one apply acceleration schemes. First, we define the framework of oblivious CLI algorithms. Next, we show that, for this family of algorithms, knowing the strong convexity parameter is crucial for obtaining accelerated rates. We then describe a restarting scheme through which we establish that \emph{stationary} algorithms (whose update rule are, on average, the same for every iteration) for smooth and convex functions are sub-optimal. Finally, we use this reduction to derive a tight lower bound for smooth and convex finite sums on the iteration complexity of any oblivious algorithm (not just~stationary).

\subsection{Framework} \label{section:framework}
In the sequel, following \cite{arjevani2016dimension}, we present the analytic framework through which we derive iteration complexity bounds. This, perhaps pedantic, formulation will allows us to study somewhat subtle distinctions between optimization algorithms. First, we give a rigorous definition for a \emph{class of optimization problems} which emphasizes the role of prior knowledge in optimization. 

\begin{definition}[Class of Optimization Problems] \label{definition:side_information}
A class of optimization problems is an ordered triple $(\cF,\cI,O_f)$, where $\cF$ is a family of functions defined over some domain designated by $\dom(\cF)$, $\cI$ is the side-information given prior to the optimization process and $O_f$ is a suitable oracle procedure which upon receiving $\bw\in\dom\cF$ and $\theta$ in some parameter set $\Theta$, returns $O_f(\bw,\theta)\subseteq\dom(\cF)$ for a given $f\in\cF$ (we shall omit the subscript in $O_f$ when $f$ is clear from the context).
\end{definition}
In finite sum problems, $\cF$ comprises of functions as defined in (\ref{opt:fsm}); the side-information may contain the smoothness parameter $L$, the strong convexity parameter $\mu$ and the number of individual functions $n$; and the oracle 
may allow one to query about a specific individual function (as in the case of incremental oracle, and as opposed to the stochastic oracle discussed in Section \ref{subsection:no_indices}). We now turn to define CLI optimization algorithms (see \cite{arjevani2016dimension} for a more comprehensive discussion). 
\begin{definition}[CLI]\label{definition:pcli} 
An optimization algorithm is called a Canonical Linear Iterative (CLI) optimization algorithm over a class of optimization problems $(\cF,\cI,O_f)$, if given an instance $f\in\cF$ and initialization points $\{\bw^{(0)}_{i}\}_{i\in\cJ}\subseteq\dom(\cF)$, where $\cJ$ is some index set, it operates by iteratively generating points such that for any $i\in\cJ$,
\begin{align} \label{assumption:dynamics}
	\bw^{(k+1)}_i \in \sum_{j\in\cJ}  O_f\circpar{\bw^{(k)}_j;\theta_{ij}^{(k)}}, \quad k=0,1,\dots 
\end{align} 
holds, where $\theta^{(k)}_{ij}\in\Theta$ are parameters chosen, stochastically or deterministically, by the algorithm, possibly based on the side-information. If the parameters do not depend on previously acquired oracle answers, we say that the given algorithm is \emph{oblivious}. For notational convenience, we assume that the solution returned by the algorithm is stored in $\bw^{(k)}_1$.
\end{definition}	
Throughout the rest of the paper, we shall be interested in oblivious CLI algorithms (for brevity, we usually omit the `CLI' qualifier) equipped with the following two incremental oracles: 
\begin{align} \label{oracles}
&\text{Generalized first-order oracle: } 	\quad O(\bw;A,B,\bc,i) \coloneqq A\nabla f_i(\bw) + B\bw+\bc, \nonumber \\
&\text{Steepest coordinate-descent oracle:} 	\quad	O(\bw;j,i) \coloneqq \bw + t^*\be_j,
\end{align}
where $A,B\in\reals^{d\times d},\bc\in\reals^d, i\in[n],j\in[d]$, $\be_j$ denotes the $j$'th $d$-dimensional  unit vector and $t^*\in	\argmin_{t\in\reals} f_j(w_1,\dots,w_{j-1},w_j+t,w_{j+1},\dots,w_d)$. We restrict the oracle parameters such that only one individual function is allowed to be accessed at each iteration. We remark that the family of oblivious algorithms with a first-order and a coordinate-descent oracle is wide and subsumes SAG, SAGA, SDCA, SDCA without duality, SVRG, SVRG++ to name a few. Also, note that coordinate-descent steps w.r.t. partial gradients can be implemented using the generalized first-order oracle by setting $A$ to be some principal minor of the unit matrix (see, e.g., RDCM in \cite{nesterov2012efficiency}). Further, similarly to \cite{arjevani2016dimension}, we allow both first-order and coordinate-descent oracles to be used during the same optimization process.

					\subsection{No Strong Convexity Parameter, No Acceleration for Finite Sum Problems} \label{section:no_acceleration}

Having described our analytic approach, we now turn to present some concrete applications. Below, we show that in the absence of a good estimation for the strong convexity parameter, the optimal iteration complexity of oblivious algorithms is $\Omega(n+k\ln(1/\epsilon))$. Our proof is based on the technique used in \cite{arjevani2016dimension,DBLP:conf/icml/ArjevaniS16} (see \cite[Section 2.3]{arjevani2016dimension} for a brief introduction of the technique).

Given $0<\epsilon<L$, we define the following set of optimization problems (over $\reals^d$ with $d>1$)
\begin{align}\label{problem:finite_sum_no_acceleration}
	F_{\mu}(\bw)&\coloneqq \frac{1}{n}\sum_{i=1}^n \circpar{\frac{1}{2}\bw^\top Q_{\mu} \bw - \bq^\top\bw},\text{where}\\
		Q_{\mu}&\coloneqq\mymat{\frac{L+\mu}{2} & \frac{\mu-L}{2}\vspace{0.05cm}\\ \frac{\mu-L}{2}&\frac{L+\mu}{2} \\ && \mu \\ &&& \ddots \\ &&&&\mu },
		~\bq\coloneqq \frac{\epsilon R}{ \sqrt2} \mymat{1\\1\\0\\\vdots\\0},\nonumber
\end{align}
parametrized by $\mu\in(\epsilon,L)$ (note that the individual functions are identical. We elaborate more on this below). It can be easily verified 
that the condition number of $F_\mu$, which we denote by $\kappa(F_\mu)$, is $L/\mu$, and that the corresponding minimizers are
$\bw^*(\mu) 
	=({\epsilon R}/{\mu\sqrt{2}},{\epsilon R}/{\mu\sqrt{2}},	0,\dots,0)^\top$ with~norm~$\le R$.


If we are allowed to use different optimization algorithm for different $\mu$ in this setting, then we know that the optimal iteration complexity is of an order of $(n+\sqrt{n\kappa(F_\mu)})\ln(1/\epsilon)$. However, if we allowed to use only one single algorithm, then we show that the optimal iteration complexity is of an order of $n+\kappa(F_\mu)\ln(1/\epsilon)$. The proof goes as follows. First, note that in this setting, the oracles defined in (\ref{oracles}) take the following form,
\begin{align} \label{eq:oracles_finite_sum}
&\text{Generalized first-order oracle:} ~O(\bw;A,B,\bc,i) = A(Q_{\mu} \bw - \bq) + B\bw+\bc,\\
&\text{Steepest coordinate-descent oracle:} ~O(\bw;j,i) = \circpar{I- (1/(Q_{\mu})_{jj})\be_i(Q_{\mu})_{j,*} }\bw - q_j/(Q_{\mu})_{jj}\be_j.\nonumber
\end{align}
Now, since the oracle answers are linear in $\mu$ and the $k$'th iterate is a $k$-fold composition of sums of the oracle answers, it follows that $\bw_1^{(k)}$ forms a $d$-dimensional vector of univariate polynomials in $\mu$ of degree $\le k$ with (possibly random) coefficients (formally, see \lemref{lem:k_degree_polynomial}, Appendix \ref{section:technical_lemmas}). Denoting the polynomial of the first coordinate of $\bE\bw_1^{(k)}(\mu)$ by $s(\mu)$, we see that for any $\mu\in(\epsilon,L)$,
\begin{align}  
	\bE\|\bw_1^{(k)}(\mu)-\bw^*(\mu)\|
	\ge \|\bE\bw_1^{(k)}(\mu)-\bw^*(\mu)\|\nonumber
\ge \left|s(\mu) - \frac{R\epsilon}{\sqrt2 \mu}  \right|\nonumber
	\ge	\frac{R\epsilon}{\sqrt{2}L} \left|\frac{\sqrt{2}s(\mu)\mu}{R \epsilon} - 1\right|,\nonumber
\end{align}
where the first inequality follows by Jensen inequality and the second inequality by focusing on the first coordinate of $\bE\bw^{(k)}(\eta)$ and $\bw^*(\eta)$. Lastly, since the coefficients of $s(\mu	)$ do not depend on $\mu$, we have by \lemref{lemma:smooth_and_unknown_strongly_convex_polynomials} in Appendix \ref{section:technical_lemmas}, that there exists $\delta>0$, such that for any $\mu\in(L-\delta,L)$ it holds that 
\begin{align*}
	\frac{R\epsilon}{\sqrt{2}L} \left|\frac{\sqrt{2}s(\mu)\mu}{R \epsilon} - 1\right|\ge
\frac{R\epsilon}{\sqrt{2}L} \circpar{1-\frac{1}{\kappa(F_\mu)}}^{k+1},
\end{align*}
by which we derive the following.
\begin{theorem}\label{thm:no_acceleration}
The iteration complexity of oblivious finite sum optimization algorithms equipped with a first-order and a coordinate-descent oracle
whose side-information does not contain the strong convexity parameter is $\tilde\Omega(n+ \kappa\ln(1/\epsilon))$.
\end{theorem}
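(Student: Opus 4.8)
The plan is to combine the ``iterates are low-degree polynomials in $\mu$'' structure with a quantitative lower bound on how well such polynomials can track the reciprocal map $\mu\mapsto R\epsilon/(\sqrt2\mu)$ near the endpoint $\mu=L$. Recall the setup: all individual functions coincide with the quadratic $\tfrac12\bw^\top Q_\mu\bw-\bq^\top\bw$, the minimizer is $\bw^*(\mu)=(R\epsilon/(\mu\sqrt2),R\epsilon/(\mu\sqrt2),0,\dots,0)^\top$, and the generalized first-order and steepest coordinate-descent oracle answers from \eqref{eq:oracles_finite_sum} are affine in $\mu$ (the coordinate-descent answer involves $1/(Q_\mu)_{jj}$, but for the $\mu$-block coordinates $(Q_\mu)_{jj}=\mu$ is a constant once $d>2$, and for the first two coordinates $(Q_\mu)_{11}=(Q_\mu)_{22}=(L+\mu)/2$ — here I would either restrict the adversary's attention to the first coordinate or absorb the denominator; since the claim is only about obliviousness, the cleanest route is to note that the coefficients defining the update are chosen independently of $\mu$, so after clearing denominators the first coordinate of $\bw_1^{(k)}$ is a univariate polynomial in $\mu$ of degree $\le k$ with algorithm-chosen, possibly random, coefficients — this is exactly \lemref{lem:k_degree_polynomial}).

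First I would take expectations: by Jensen, $\bE\|\bw_1^{(k)}(\mu)-\bw^*(\mu)\|\ge\|\bE\bw_1^{(k)}(\mu)-\bw^*(\mu)\|$, and projecting onto the first coordinate gives $\bE\|\bw_1^{(k)}(\mu)-\bw^*(\mu)\|\ge|s(\mu)-R\epsilon/(\sqrt2\mu)|$, where $s(\mu)\coloneqq[\bE\bw_1^{(k)}(\mu)]_1$ is a (deterministic) univariate polynomial of degree $\le k$ whose coefficients do not depend on $\mu$ — this last point is the crucial consequence of obliviousness, and it is what lets me quantify over all $\mu\in(\epsilon,L)$ with a single polynomial. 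Rewriting, $|s(\mu)-R\epsilon/(\sqrt2\mu)| = \frac{R\epsilon}{\sqrt2\,L}\cdot\frac{L}{\mu}\cdot\bigl|\frac{\sqrt2\,s(\mu)\mu}{R\epsilon}-1\bigr|\ge\frac{R\epsilon}{\sqrt2\,L}\bigl|\frac{\sqrt2\,s(\mu)\mu}{R\epsilon}-1\bigr|$ for $\mu\le L$. Set $p(\mu)\coloneqq\frac{\sqrt2\,s(\mu)\mu}{R\epsilon}-1$, a polynomial of degree $\le k+1$. I now invoke \lemref{lemma:smooth_and_unknown_strongly_convex_polynomials}: for any degree-$(k+1)$ polynomial $p$ there is a $\delta>0$ such that $|p(\mu)|\ge(1-\mu/L)^{k+1}=(1-1/\kappa(F_\mu))^{k+1}$ for all $\mu\in(L-\delta,L)$. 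Intuitively this holds because $(1-\mu/L)^{k+1}$ is itself a degree-$(k+1)$ polynomial vanishing to order $k+1$ at $L$; a genuinely different polynomial of the same degree cannot vanish to that order, so either $p\equiv\pm(1-\mu/L)^{k+1}$ (in which case equality holds) or the lowest-order term of $p$ at $\mu=L$ has degree $\le k$ and dominates near $L$.

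This gives, for a suitable $\mu^\star=\mu^\star(k)$ close to $L$, the bound $\bE\|\bw_1^{(k)}(\mu^\star)-\bw^*(\mu^\star)\|\ge\frac{R\epsilon}{\sqrt2\,L}(1-1/\kappa)^{k+1}$, with $\|\bw^*(\mu^\star)\|\le R$. To extract the complexity statement I would convert this suboptimality-in-distance into suboptimality-in-function-value (the quadratic is $L$-smooth, so $F_{\mu}(\bw)-F_{\mu}^*\le\frac{L}{2}\|\bw-\bw^*\|^2$ — wait, that is the wrong direction; I instead use $\mu$-strong convexity, $F_{\mu}(\bw)-F_{\mu}^*\ge\frac{\mu}{2}\|\bw-\bw^*\|^2$, so a lower bound on distance yields a lower bound on function-value gap up to the factor $\mu/2\ge\epsilon/2$), and then demand this exceed the target accuracy. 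Requiring $\frac{R\epsilon}{\sqrt2 L}(1-1/\kappa)^{k+1}$ to be below the threshold forces $(k+1)\ge\Omega(\kappa\ln(1/\epsilon))$ after taking logs and using $\ln(1/(1-1/\kappa))\le 2/\kappa$ for $\kappa\ge2$. Combining with the trivial $\Omega(n)$ lower bound for finite sums (an oblivious algorithm must query each of the $n$ individual functions at least once, or it cannot distinguish the sum from one where an unqueried $f_i$ is perturbed — formally, reuse the standard argument from \cite{arjevani2016dimension}), I obtain the claimed $\tilde\Omega(n+\kappa\ln(1/\epsilon))$.

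The main obstacle is the polynomial-approximation lemma (\lemref{lemma:smooth_and_unknown_strongly_convex_polynomials}), which is doing the real work: it must be robust to the fact that $s$ — hence $p$ — can be \emph{any} polynomial of degree $\le k+1$ the adversary algorithm produces, and the conclusion must hold near $L$ \emph{uniformly in the choice of $s$} in the sense that a single $\mu^\star$ witnesses the bound. Since that lemma is already stated in the excerpt as available, the remaining steps are the routine reductions above; the one place to be careful is the coordinate-descent oracle's $\mu$-dependence in its denominator on the first $2\times2$ block, which I handle by restricting the meaningful dynamics to the $\mu$-block (indices $\ge3$) or, equivalently, observing that clearing the at-most-degree-$k$ denominators only multiplies $s$ by a fixed nonzero polynomial and does not change the degree bookkeeping in a way that affects the conclusion.
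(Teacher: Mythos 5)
Your proposal reproduces the paper's argument essentially step for step: the polynomial structure via \lemref{lem:k_degree_polynomial}, Jensen's inequality to pass to $\bE\bw_1^{(k)}$, projection onto the first coordinate, the rewrite to $\tfrac{R\epsilon}{\sqrt2 L}\bigl|\sqrt2\, s(\mu)\mu/(R\epsilon)-1\bigr|$, the invocation of \lemref{lemma:smooth_and_unknown_strongly_convex_polynomials} near $\mu=L$, and the appeal to \cite{arjevani2016dimension} for the $\Omega(n)$ term, so both the route and the key lemmas match the paper's. One point worth flagging: your concern about the steepest coordinate-descent oracle on coordinates $j\in\{1,2\}$ is legitimate --- there $(Q_\mu)_{jj}=(L+\mu)/2$, so the oracle answer carries factors of $2/(L+\mu)$ and is rational, not polynomial, in $\mu$, which is not literally covered by the inductive step of \lemref{lem:k_degree_polynomial} as written (it asserts the remaining terms are ``constant or linear in $\mu$''); your fix of clearing the $(L+\mu)$-denominators, which still leaves a numerator of degree $O(k)$ and hence preserves the $\Omega(\kappa\ln(1/\epsilon))$ conclusion up to a constant in the exponent, is the right repair and is in fact slightly more careful than the paper's own account of that lemma.
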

The $n$ part of the lower bound holds for any type of finite sum algorithm and is proved in \cite[Theorem 5]{arjevani2016dimension}. The lower bound stated in \thmref{thm:no_acceleration} is tight up to logarithmic factors and is attained by, e.g., SAG \cite{schmidt2013minimizing}. Although relying on a finite sum with identical individual functions may seem somewhat disappointing, it suggests that some variance-reduction schemes can only give optimal dependence in terms of $n$, and that obtaining optimal dependence in terms of the condition number need to be done through other (acceleration) mechanisms (e.g., \cite{lin2015universal}). Lastly, note that, this bound holds for any number of iterations (regardless of the problem parameters).

								\subsection{Stationary Algorithms for Smooth and Convex Finite Sums are Sub-optimal} \label{section:stationary}
	
In the previous section, we showed that not knowing the strong convexity parameter reduces the optimal attainable iteration complexity. In this section, we use this result to show that whereas general optimization algorithms for smooth and convex finite sum problems obtain iteration complexity of $\tilde{\cO}(n+\sqrt{nL/\epsilon})$, the optimal iteration complexity of stationary algorithms (whose expected update rules are fixed) is $\Omega(n+L/\epsilon)$.

The proof (presented below) is based on a general restarting scheme (see Scheme 1) used in \cite{DBLP:conf/icml/ArjevaniS16}. The scheme allows one to apply algorithms which are designed for $L$-smooth and convex problems on smooth and strongly convex finite sums by explicitly incorporating the strong convexity parameter. The key feature of this reduction is its ability to `preserve' the exponent of the iteration complexity from an order of $C(f)(L/\epsilon)^\alpha$ in the non-strongly convex case to an order of $\circpar{C(f)\kappa}^{\alpha}\ln(1/\epsilon)$ in the strongly convex case, 
where $C(f)$ denotes some quantity which may depend on $f$ but not on $k$, 
and $\alpha$ is some  positive constant. 
\begin{table}[ht]
		\vskip 0.15in
		\begin{center}
		\begin{small}
		\begin{sc}
		\begin{tabular}{llccr}\label{table:restart_scheme}
		\small \textbf{Scheme 1} &\small Restarting Scheme \\
		\hline
		\scriptsize \textbf{Given}& \scriptsize An optimization algorithm $\cA$ \\
		&\scriptsize  for smooth convex functions with \\
		&\scriptsize  \quad $f(\bw^{(k)})-f^*\le \frac{C(f)\norm{\bar{\bw}^{(0)}-\bw^*}^2}{k^\alpha}$\\
		&\scriptsize \quad for any initialization point $\bar{\bw}^0$\\
		\scriptsize \textbf{Iterate} &\scriptsize for $t=1,2,\dots$\\
		&\scriptsize \quad Restart the step size schedule of $\cA$ \\
		&\scriptsize \quad Initialize $\cA$ at $\bar{\bw}^{(0)}$ \\
		&\scriptsize \quad Run $\cA$  for $\sqrt[\alpha]{4C(f)/\mu}$ iterations\\
		&\scriptsize \quad Set $\bar{\bw}^{(0)}$ to be the point returned by $\cA$\\
		\scriptsize \textbf{End}\\
		\hline
		\end{tabular}
		\end{sc}
		\end{small}
		\end{center}
		\vskip -0.1in
\end{table}

The proof goes as follows. Suppose $\cA$ is a stationary CLI optimization algorithm for $L$-smooth and convex finite sum problems equipped with oracles (\ref{oracles}). Also, assume that its convergence rate for $k\ge N,~N\in\bN$ is of an order of $\frac{ n^\gamma L^\beta\norm{\bw^{(0)}-\bw^*}^2}{k^{\alpha}}$, for some $\alpha,\beta, \gamma>0$. First, observe that in this case we must have $\beta=1$. For otherwise, we get
	$f(\bw^{(k)})-f^*= {{( (\nu f)(\bw^{(k)})-(\nu f)^*)}}/{\nu}\le  {n^\gamma (\nu L)^\beta}/{\nu k^{\alpha}}= \nu^{\beta-1}n^\gamma L^\beta/{ }{k^{\alpha}}$, implying that, simply by scaling $f$, one can optimize to any level of accuracy using at most $N$ iterations, which contradicts \cite[Theorem 5]{arjevani2016dimension}. Now, by \cite[Lemma 1]{DBLP:conf/icml/ArjevaniS16}, Scheme~1 produces a new algorithm whose iteration complexity for smooth and strongly convex finite sums with condition number $\kappa$ is 
\begin{align} \label{new_finite_sum_convergence}
\bigO{N+n^\gamma  \circpar{{ L}/{\epsilon}}^{\alpha} } \longrightarrow \bigtO{n+n^\gamma  \kappa^\alpha\ln(1/\epsilon)}. 
\end{align}
Finally, stationary algorithms are invariant under this restarting scheme. Therefore, the new algorithm cannot depend on $\mu$. Thus, by \thmref{thm:no_acceleration}, it must hold that that $\alpha\ge1$ and that $\max\{N,n^\gamma\}= \Omega(n)$, proving the following. 
\begin{theorem}\label{thm:stationary}
If the iteration complexity of a stationary optimization algorithm for smooth and convex finite sum problems equipped with a first-order and a coordinate-descent oracle is of the form of the l.h.s. of (\ref{new_finite_sum_convergence}), then it must be at least ${\Omega}(n+L/\epsilon)$.
\end{theorem}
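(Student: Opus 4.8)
The plan is to reduce the statement to \thmref{thm:no_acceleration} by running the given stationary algorithm through the restarting Scheme~1, after first normalizing how its rate depends on the smoothness constant. Write the hypothesized guarantee of the stationary algorithm $\cA$ in rate form: there is $N\in\bN$ such that, for every $L$-smooth convex finite sum $f$, $f(\bw^{(k)})-f^*\le n^\gamma L^\beta\norm{\bw^{(0)}-\bw^*}^2/k^{\alpha}$ for all $k\ge N$, with $\alpha,\beta,\gamma>0$; up to renaming the exponents this is the same as an iteration complexity $\bigO{N+n^\gamma(L/\epsilon)^{\alpha}}$, the left-hand side of \eqref{new_finite_sum_convergence}.

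First I would show $\beta=1$. Apply $\cA$ to the rescaled finite sum $\nu f=\frac1n\sum_i\nu f_i$ for $\nu>0$; this is again a smooth convex finite sum, now $\nu L$-smooth, with the same minimizer and the same $\norm{\bw^{(0)}-\bw^*}$. Dividing the guarantee obtained for $\nu f$ by $\nu$ gives $f(\bw^{(k)})-f^*\le \nu^{\beta-1}\,n^\gamma L^\beta\norm{\bw^{(0)}-\bw^*}^2/k^{\alpha}$. If $\beta\neq1$, taking $\nu\to\infty$ (when $\beta<1$) or $\nu\to0$ (when $\beta>1$) drives this below any prescribed accuracy with $k=N$ held fixed, contradicting the $\Omega(n)$ lower bound of \cite[Theorem~5]{arjevani2016dimension}, which holds for every finite sum algorithm and is insensitive to rescaling the objective. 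Hence $\beta=1$.

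Next, with $\beta=1$, I would feed $\cA$ into Scheme~1 and invoke \cite[Lemma~1]{DBLP:conf/icml/ArjevaniS16} to obtain an algorithm for $L$-smooth and $\mu$-strongly convex finite sums with iteration complexity as on the right-hand side of \eqref{new_finite_sum_convergence}, namely $\bigtO{n+n^\gamma\kappa^{\alpha}\ln(1/\epsilon)}$ with $\kappa=L/\mu$. The crucial point is that a stationary algorithm is a fixed point of this restarting operation: since the expected update rule of $\cA$ is identical in every iteration, re-initializing $\cA$ at its current iterate after a ($\mu$-dependent) number of steps reproduces, in expectation, exactly the trajectory of $\cA$ run without interruption; hence the resulting strongly convex algorithm is still oblivious and its side information still does not contain $\mu$. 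Then \thmref{thm:no_acceleration} applies, forcing $n^\gamma\kappa^{\alpha}\ln(1/\epsilon)=\tilde\Omega(\kappa\ln(1/\epsilon))$; sending $\kappa\to\infty$ for fixed $n$ yields $\alpha\ge1$, while the $n$-part of the bound (equivalently \cite[Theorem~5]{arjevani2016dimension}, applied at any fixed $\epsilon$ to the original complexity $N+n^\gamma(L/\epsilon)^{\alpha}$) gives $\max\{N,n^\gamma\}=\Omega(n)$.

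Finally I would assemble the pieces: since $\alpha\ge1$, $\gamma>0$ and $0<\epsilon<L$, we have $n^\gamma(L/\epsilon)^{\alpha}\ge\max\{n^\gamma,L/\epsilon\}$, so $N+n^\gamma(L/\epsilon)^{\alpha}\ge\frac12\max\{N,n^\gamma\}+\frac12(L/\epsilon)=\Omega(n+L/\epsilon)$, which is the claimed bound. I expect the main obstacle to be making the restarting step rigorous: one must verify, at the level of \defref{definition:pcli}, that the restarted procedure is genuinely a stationary, oblivious CLI algorithm over the class of smooth strongly convex finite sums whose oracle parameters do not encode $\mu$ (so that \thmref{thm:no_acceleration} truly applies to it), and keep the bookkeeping between the exponents $(\alpha,\beta,\gamma)$ in the rate form and in the iteration-complexity form consistent, so that the $\alpha$ constrained by \thmref{thm:no_acceleration} is the same one appearing in the final bound. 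The remaining scaling and algebraic manipulations are routine.
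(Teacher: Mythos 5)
Your proposal is correct and follows essentially the same route as the paper's proof: normalize $\beta=1$ by the rescaling argument against the $\Omega(n)$ lower bound, pass $\cA$ through Scheme~1 via \cite[Lemma~1]{DBLP:conf/icml/ArjevaniS16}, observe that stationarity makes the restarted algorithm a fixed point of the scheme (so it remains oblivious with no $\mu$ in its side-information), invoke \thmref{thm:no_acceleration} to force $\alpha\ge1$ and $\max\{N,n^\gamma\}=\Omega(n)$, and assemble the $\Omega(n+L/\epsilon)$ bound. Your flagged obstacle---rigorously verifying at the level of \defref{definition:pcli} that the restarted procedure is still a stationary oblivious CLI algorithm independent of $\mu$---is exactly the step the paper asserts without elaboration, so your caution is warranted but does not indicate a divergence from the intended argument.
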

We note that, this lower bound is tight and is attained by, e.g., SDCA.

				\subsection{A Tight Lower Bound for Smooth and Convex Finite Sums} \label{section:smooth_lower_bounds}

We now turn to derive a lower bound for finite sum problems with smooth and convex individual functions using the restarting scheme shown in the previous section. Note that, here we allow any oblivious optimization algorithm, not just stationary. The technique shown in Section \ref{section:no_acceleration} of reducing an optimization problem into a polynomial approximation problem was used in \cite{arjevani2016dimension} to derive lower bounds for various settings. The smooth and convex case was proved only for $n=1$, and a generalization for $n>1$ seems to reduce to a non-trivial approximation problem. Here, using Scheme 1, we are able to avoid this difficulty by reducing the non-strongly case to the strongly convex case, for which a lower bound for a general $n$ is known.  

The proof follows the same lines of the proof of \thmref{thm:stationary}. Given an oblivious optimization algorithm
for finite sums with smooth and convex individuals equipped with oracles (\ref{oracles}), we apply again Scheme~1 to get an algorithm for the smooth and strongly convex case, whose iteration complexity is as in (\ref{new_finite_sum_convergence}). Now, crucially, oblivious algorithm are invariant under Scheme 1 (that is, when applied on a given oblivious algorithm, Scheme 1 produces another oblivious algorithm). Therefore,  using \cite[Theorem 2]{arjevani2016dimension}, we obtain the following.
\begin{theorem}
If the iteration complexity of an oblivious optimization algorithm for smooth and convex finite sum problems equipped with a first-order and a coordinate-descent oracle is of the form of the l.h.s. of (\ref{new_finite_sum_convergence}), then it must be at least $${\Omega}\circpar{n+\sqrt{\frac{nL}{\epsilon}}}.$$
\end{theorem}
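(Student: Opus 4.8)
The plan is to re-run the proof of \thmref{thm:stationary} with the stationarity hypothesis removed and the appeal to \thmref{thm:no_acceleration} replaced by the accelerated lower bound for oblivious algorithms in \cite[Theorem~2]{arjevani2016dimension}, which states that any oblivious CLI algorithm for $L$-smooth, $\mu$-strongly convex finite sums equipped with a first-order and a coordinate-descent oracle requires $\Omega(n+\sqrt{n\kappa}\ln(1/\epsilon))$ iterations, where $\kappa=L/\mu$. So let $\cA$ be an oblivious algorithm for $L$-smooth and convex finite sums whose iteration complexity is $\bigO{N+n^\gamma(L/\epsilon)^{\alpha}}$ for some $N\in\bN$ and $\alpha,\gamma>0$, i.e. the l.h.s. of (\ref{new_finite_sum_convergence}); as explained in the proof of \thmref{thm:stationary}, postulating this form rather than an independent exponent on $L$ is without loss of generality, since a mismatched exponent could be removed by rescaling $f$ (which scales only the smoothness constant and the suboptimality), on pain of contradicting the generic $\Omega(n)$ bound \cite[Theorem~5]{arjevani2016dimension}.

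Next I would feed $\cA$, together with the value of $\mu$, into Scheme~1. By \cite[Lemma~1]{DBLP:conf/icml/ArjevaniS16}, this produces an algorithm $\cA_\mu$ whose iteration complexity on $L$-smooth, $\mu$-strongly convex finite sums is the r.h.s. of (\ref{new_finite_sum_convergence}), namely $\bigtO{n+n^\gamma\kappa^{\alpha}\ln(1/\epsilon)}$. The key structural observation — the only place where dropping stationarity matters — is that Scheme~1 merely restarts the step-size schedule and re-initializes, and in particular never inspects oracle answers; hence if the parameters $\theta^{(k)}_{ij}$ of $\cA$ are chosen independently of previously acquired oracle answers, so are those of $\cA_\mu$, so $\cA_\mu$ is again an oblivious CLI algorithm over the same class of problems with a first-order and a coordinate-descent oracle. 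Unlike the stationary case, $\cA_\mu$ now does depend on $\mu$, through the restart length $\sqrt[\alpha]{4C(f)/\mu}$; but this is harmless, because \cite[Theorem~2]{arjevani2016dimension} lower-bounds \emph{every} oblivious algorithm, whatever side information it uses. Therefore that bound applies to $\cA_\mu$.

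It then remains to translate the constraint back to $\cA$. Since the upper bound $\bigtO{n+n^\gamma\kappa^{\alpha}\ln(1/\epsilon)}$ on $\cA_\mu$ must dominate the lower bound $\Omega(n+\sqrt{n\kappa}\ln(1/\epsilon))$ over the whole admissible range of $(n,\kappa,\epsilon)$, letting $\kappa\to\infty$ forces $\alpha\ge 1/2$, and then letting $n\to\infty$ forces $\gamma\ge 1/2$. Assuming $\epsilon<L$ (otherwise the claimed bound is already subsumed by $\Omega(n)$), these two exponent inequalities give $n^\gamma(L/\epsilon)^{\alpha}\ge n^{1/2}(L/\epsilon)^{1/2}=\sqrt{nL/\epsilon}$, so $\cA$'s complexity $N+n^\gamma(L/\epsilon)^{\alpha}$ is $\Omega(\sqrt{nL/\epsilon})$; since $\cA$ is also a finite sum algorithm, the same quantity is $\Omega(n)$ by \cite[Theorem~5]{arjevani2016dimension}, and averaging the two lower bounds yields $N+n^\gamma(L/\epsilon)^{\alpha}=\Omega(n+\sqrt{nL/\epsilon})$, which is the assertion.

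The step I expect to demand the most care is the invocation of Scheme~1 and \cite[Lemma~1]{DBLP:conf/icml/ArjevaniS16}: one must verify that the postulated complexity of $\cA$ genuinely supplies a rate of the form $f(\bw^{(k)})-f^*\le C(f)\norm{\bar{\bw}^{(0)}-\bw^*}^2/k^{\alpha}$ holding uniformly over \emph{all} restart initialization points (not merely the nominal $\bw^{(0)}$), that $\sqrt[\alpha]{4C(f)/\mu}$ may be rounded up to an integer without changing the order, and — exactly as behind the $\tilde\Omega$ of \thmref{thm:no_acceleration} — that the logarithmic factors hidden inside $\bigtO{\cdot}$ do not swallow the polynomial-in-$\kappa$ and polynomial-in-$n$ separations that force $\alpha\ge 1/2$ and $\gamma\ge 1/2$.
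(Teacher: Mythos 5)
Your proposal is correct and follows essentially the same route as the paper: apply Scheme~1 to the given oblivious algorithm, observe that the resulting algorithm $\cA_\mu$ is still oblivious (even though, unlike in the stationary case, it now depends on $\mu$ through the restart length), and invoke the accelerated lower bound of \cite[Theorem~2]{arjevani2016dimension}, which applies to all oblivious CLI algorithms regardless of side information. You spell out the exponent bookkeeping ($\alpha\ge1/2$, $\gamma\ge1/2$) and the $\Omega(n)$ term that the paper leaves implicit, but there is no substantive difference in the argument.
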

This bound is tight and is obtained by, e.g., accelerated SDCA \cite{shalev2016accelerated}. Optimality in terms of $L$ and~$\epsilon$ can be obtained simply by applying Accelerate Gradient Descent \cite{nesterov2004introductory}, or alternatively, by using an accelerated version of SVRG as presented in \cite{nitanda2016accelerated}. More generally, one can apply acceleration schemes, e.g., \cite{lin2015universal}, to get an optimal dependence on $\epsilon$.

\subsubsection*{Acknowledgments}
We thank Raanan Tvizer and Maayan Maliach for several helpful and insightful discussions.

\bibliographystyle{alpha}
\bibliography{../../../mybib}

\newpage
\appendix


\section{Technical Lemmas}  \label{section:technical_lemmas}




\begin{lemma} \label{lem:entropy_lower_bound}
Let 
$H_b(p) \coloneqq -p\log_2 p-(1-p)\log_2(1-p),$
be the binary entropy function. Then,
\begin{align*}
H_b(p)\ge 1-4(p-1/2)^2.
\end{align*}
\end{lemma}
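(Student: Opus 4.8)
The plan is to reduce the inequality to a one-sided bound on a power series whose terms have a manifestly nonnegative gap. First I would apply the substitution $p=\frac{1+x}{2}$ with $x\in[-1,1]$, so that $4(p-\tfrac12)^2=x^2$, and a one-line computation (using $\log_2\frac{1\pm x}{2}=\log_2(1\pm x)-1$) gives
\[
H_b\!\left(\tfrac{1+x}{2}\right)=1-\frac{1+x}{2}\log_2(1+x)-\frac{1-x}{2}\log_2(1-x).
\]
Thus, after multiplying through by $2\ln 2>0$, the claim becomes equivalent to
\[
(1+x)\ln(1+x)+(1-x)\ln(1-x)\ \le\ 2x^2\ln 2,\qquad x\in[-1,1].
\]

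Next I would expand the left-hand side. From $\ln(1+x)=\sum_{k\ge1}\frac{(-1)^{k-1}}{k}x^k$ one obtains $(1+x)\ln(1+x)=x+\sum_{k\ge2}\frac{(-1)^k}{k(k-1)}x^k$, and substituting $x\mapsto-x$ gives $(1-x)\ln(1-x)=-x+\sum_{k\ge2}\frac{1}{k(k-1)}x^k$. Adding the two, the linear terms and every odd-degree term cancel, and what remains is
\[
(1+x)\ln(1+x)+(1-x)\ln(1-x)=2\sum_{m\ge1}\frac{x^{2m}}{2m(2m-1)}.
\]
Now the telescoping identity $\frac{1}{2m(2m-1)}=\frac{1}{2m-1}-\frac{1}{2m}$ yields $\sum_{m\ge1}\frac{1}{2m(2m-1)}=1-\tfrac12+\tfrac13-\tfrac14+\cdots=\ln 2$, while $|x|\le 1$ forces $x^{2m}\le x^2$ for every $m\ge 1$. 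Combining,
\[
2\sum_{m\ge1}\frac{x^{2m}}{2m(2m-1)}\ \le\ 2x^2\sum_{m\ge1}\frac{1}{2m(2m-1)}\ =\ 2x^2\ln 2,
\]
which is precisely the desired bound.

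I do not expect a substantial obstacle here; the only delicate point is justifying these series manipulations at the endpoints $x=\pm1$ (that is, $p\in\{0,1\}$), where $\ln(1\mp x)$ is represented only by a conditionally convergent series. I would handle this either by Abel's theorem or, more simply, by noting that both sides of the equivalent inequality are continuous on the closed interval $[-1,1]$, so the strict-interior estimate passes to the endpoints by continuity (indeed both sides equal $2\ln 2$ there, which, together with the value at $p=\tfrac12$, also explains why the bound is tight). If one prefers to avoid power series entirely, an alternative route is to set $f(p):=H_b(p)-1+4(p-\tfrac12)^2$, compute $f''(p)=8-\frac{1}{p(1-p)\ln 2}$, observe that $f$ is convex on a neighbourhood of $p=\tfrac12$ and concave near each endpoint, and then combine $f(0)=f(\tfrac12)=f(1)=0$, $f'(\tfrac12)=0$, and the symmetry $f(p)=f(1-p)$ to conclude $f\ge 0$ on $[0,1]$.
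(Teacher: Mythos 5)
Your proof is correct, and the main argument takes a genuinely different route from the paper's. You substitute $p=(1+x)/2$, reduce the inequality to $(1+x)\ln(1+x)+(1-x)\ln(1-x)\le 2x^2\ln 2$, expand the left-hand side as the power series $2\sum_{m\ge1}\frac{x^{2m}}{2m(2m-1)}$, and conclude by the pointwise bound $x^{2m}\le x^2$ together with the telescoping identity $\sum_{m\ge1}\frac{1}{2m(2m-1)}=\ln 2$. The paper instead proceeds by pure calculus: it sets $\varphi(p)=H_b(p)-1+4(p-\tfrac12)^2$, computes $\varphi''(p)=8-\frac{1}{\ln(2)\,p(1-p)}$, determines the sign pattern of $\varphi''$ (positive on a symmetric interval around $\tfrac12$, negative outside), and uses Rolle's theorem plus the zeros $\varphi(0)=\varphi(\tfrac12)=\varphi(1)=0$ to show every local minimum of $\varphi$ on $[0,1]$ lies in $\{0,\tfrac12,1\}$, hence $\varphi\ge0$. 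Your series argument has the advantage of being a direct term-by-term comparison that sidesteps the somewhat delicate bookkeeping of stationary points; the paper's argument avoids power series entirely and is arguably more self-contained for a reader uncomfortable with manipulating conditionally convergent expansions. Your closing sketch (convexity/concavity of $\varphi$ plus the three zeros and symmetry) is essentially the paper's proof. One small remark on your endpoint caveat: since the coefficients $\frac{1}{2m(2m-1)}$ are nonnegative and summable, the series converges absolutely and uniformly on all of $[-1,1]$, so the continuity argument you prefer is indeed the right one and no appeal to Abel's theorem is needed.
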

\begin{proof} First, note that the first two derivatives of $H$ are
\begin{align*}
H_b'(p)=\log_2(1-p)-\log_2 p,\\
H_b''(p)=-\frac{1}{\ln(2)p(1-p)}.
\end{align*}
We show that the following function
\begin{align*}
\varphi(p) \coloneqq H_b(p) - \circpar{1 - 4\circpar{p-\frac12}^2},
\end{align*}
is non-negative on $[0,1]$ (note that, since $\varphi$ is continuous, it is bounded from below on $[0,1]$ and its minimum is attained on some local minimum in $[0,1]$). Let us locate all the extrema points of $\varphi$ in $(0,1)$. We have that, 
\begin{align*}
\varphi'(p) = \log_2\circpar{\frac{1-p}{p}} + 8\circpar{p-\frac12}.
\end{align*}
Therefore, $\varphi(1/2)=0$, and since 
\begin{align*}
\varphi''(p) = \frac{-1}{\ln(2)x(1-x)} + 8,
\end{align*}
it follows that $\varphi''(1/2)>0$, which implies that $p=1/2$ is a local minimum of $\varphi$. We claim that there are exactly two more extrema points of $\varphi$ which are in fact local maximum points. To this end, note that 
\begin{align*}
\varphi''(p)
\begin{cases}
>0& |p-1/2| < c,\\
=0& |p-1/2| = c,\\
<0& |p-1/2| > c,
\end{cases}
\end{align*}
where $c \coloneqq \sqrt{1/4-1/(8\ln(2))}$.
Therefore, by Rolle's Theorem, $\varphi'$ does not vanish in $0<|p-1/2| \le c$, and vanishes exactly once in $p>1/2+c$ and exactly once in $p<1/2-c$. Since, $\varphi''$ is strictly negative in $|p-1/2|>c$, it follows that the other two stationary points of $\varphi'$ are local maxima of $\varphi$. All in all, we have that if $p'\in[0,1]$ is a local minimum of $\varphi$, then $p'\in\{0,1/2,1\}$, which implies that 
\begin{align*}
\varphi(p)\ge\min\{\varphi(0),\varphi(1/2),\varphi(1)  \}=0,
\end{align*}
concluding the proof.
\end{proof}

\begin{lemma} \label{lem:k_degree_polynomial}
When applied on problem (\ref{problem:finite_sum_no_acceleration}) with oracles (\ref{eq:oracles_finite_sum}), the coordinates of iterates produced by oblivious stochastic CLIs form polynomials in $\mu$ with random coefficients (which do not depend on $\mu$) and whose degrees do not exceed the iteration number.
\end{lemma}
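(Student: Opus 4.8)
The plan is to induct on the iteration number $k$, using the single structural fact that an oblivious algorithm whose side-information $\cI$ does not contain $\mu$ must choose all of its oracle parameters $\theta^{(k)}_{ij}$ independently of $\mu$ (they may still be random, and correlated across iterations through the algorithm's own seed, but are never a function of $\mu$). Consequently the only way $\mu$ can enter an iterate is through an oracle answer, and I will show that each answer raises the degree in $\mu$ by at most one. For the base case, the iterates at $k=0$ are the initialization points, which are read off from $\cI$ alone and are therefore $\mu$-free constant vectors, i.e.\ degree-$0$ polynomials with (possibly random) $\mu$-independent coefficients.

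For the inductive step I would assume that for every $j\in\cJ$ the coordinates of $\bw^{(k)}_j$ are polynomials in $\mu$ of degree $\le k$ with random, $\mu$-free coefficients, and invoke \eqref{assumption:dynamics} to write $\bw^{(k+1)}_i$ as a finite sum of answers $O_f(\bw^{(k)}_j;\theta^{(k)}_{ij})$ with $\mu$-free parameters. Since vectors of polynomials of degree $\le k+1$ with $\mu$-free coefficients form a set closed under addition, it suffices to check one application of each of the two oracles in \eqref{eq:oracles_finite_sum}. For the generalized first-order oracle, write $Q_\mu = P + \mu S$ where $P$ and $S$ are the fixed matrices collecting, respectively, the value at $\mu=0$ and the coefficient of $\mu$ in each (affine) entry of $Q_\mu$; then $O(\bw;A,B,\bc,i) = (AP+B)\bw + \mu\,AS\bw - A\bq + \bc$, so a vector $\bw$ whose coordinates have degree $\le k$ is mapped to one of degree $\le k+1$, with coefficients assembled polynomially from the $\mu$-free data $A,B,\bc,P,S,\bq$ and the $\mu$-free coefficients of $\bw$.

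The one genuinely delicate point — and what I expect to be the main obstacle — is the steepest coordinate-descent oracle $\bw\mapsto\bw-\big((Q_\mu)_{j,*}\bw-q_j\big)/(Q_\mu)_{jj}\,\be_j$, because of the factor $1/(Q_\mu)_{jj}$. When $j\ge3$ this is harmless: there $(Q_\mu)_{j,*}=\mu\be_j^\top$, $(Q_\mu)_{jj}=\mu$ and $q_j=0$, so the powers of $\mu$ cancel and the step simply zeroes coordinate $j$, a $\mu$-free map that raises no degree. When $j\in\{1,2\}$, however, $(Q_\mu)_{jj}=(L+\mu)/2$ is a non-constant affine function of $\mu$, so a literal application returns a polynomial of degree $\le k+1$ divided by the fixed affine factor $L+\mu$. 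This is dealt with by carrying the $\mu$-free affine denominators along explicitly: after $k$ iterations each coordinate has the form $p(\mu)/(L+\mu)^m$ with $m\le k$ and $\deg p\le k+m$, and since $L+\mu$ is bounded away from $0$ on $(\epsilon,L)$, clearing it changes the degree only by $O(k)$ and leaves the polynomial-approximation step (\lemref{lemma:smooth_and_unknown_strongly_convex_polynomials}) intact (alternatively one simply restricts the coordinate-descent oracle to $j\ge3$).

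To close, I would observe that taking expectations over the algorithm's internal randomness turns the $\mu$-free random coefficients into $\mu$-free deterministic ones, so each coordinate of $\bE\bw^{(k)}_1(\mu)$ — in particular the univariate polynomial $s(\mu)$ used in \secref{section:no_acceleration} — is an honest polynomial in $\mu$ of degree at most $k$ (at most $O(k)$ if the coordinate-descent denominators are retained). Apart from the coordinate-descent subtlety above, the entire argument is bookkeeping of the affine dependence $Q_\mu=P+\mu S$ propagated through a $k$-fold composition of sums of affine maps.
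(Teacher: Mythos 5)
Your induction is the same as the paper's: base case reads off the initialization from the $\mu$-free side-information, inductive step observes that each oracle call can raise the degree by at most one because obliviousness forces the parameters $A,B,\bc,j,i$ to be $\mu$-free. The decomposition $Q_\mu=P+\mu S$ is a tidier way of saying what the paper says in words.

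The genuinely valuable part of your write-up is the coordinate-descent subtlety, and here you have actually caught something the paper's own proof glosses over. The paper asserts that in the oracle answers ``the rest of the terms are either constant or linear in $\mu$.'' This is simply false for the steepest coordinate-descent oracle when $j\in\{1,2\}$: there $(Q_\mu)_{jj}=(L+\mu)/2$, so the answer carries a factor $2/(L+\mu)$, which is a rational (not affine) function of $\mu$. Your case split ($j\ge3$ harmless because the $\mu$'s cancel; $j\in\{1,2\}$ problematic) is exactly right, and it means the lemma as stated is not literally correct once the steepest coordinate-descent oracle is allowed on the top $2\times 2$ block.

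Your proposed repair is where more care is needed. Tracking the iterates as $p(\mu)/(L+\mu)^m$ with $m\le k$ and $\deg p\le k+m$ is sound bookkeeping, but the claim that ``clearing it \dots leaves the polynomial-approximation step intact'' is too quick. Writing $\tilde s(\mu)\mu+1=\bigl(\mu p(\mu)+(L+\mu)^m\bigr)/(L+\mu)^m$ and applying \lemref{lemma:smooth_and_unknown_strongly_convex_polynomials} to the degree-$\le 2k$ numerator (normalized by $L^m$) gives a lower bound of the form $2^{-m}(1-\mu/L)^{2k+1}$, because $L^m/(L+\mu)^m\in[2^{-m},1]$. The factor $2^{-m}$ is not uniformly bounded as $k$ grows (an algorithm that always steps on coordinates $1,2$ has $m=k$), so it is not automatic that the downstream $\Omega(\kappa\ln(1/\epsilon))$ conclusion of \thmref{thm:no_acceleration} survives; one has to argue that this loss is dominated in the regime $\mu\to L$ used by the lemma, or else adopt your cleaner alternative of restricting coordinate descent to $j\ge3$ (at the cost of proving a lower bound for a smaller oracle class). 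So: same route as the paper, but you have found a real gap in its coordinate-descent step, and the patch deserves to be carried out explicitly rather than asserted.
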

\begin{proof}
Let $\cA$ be an oblivious stochastic CLI, and suppose we apply $\cA$ on the class of problems (\ref{problem:finite_sum_no_acceleration}) parametrized by $\mu$, using oracles (\ref{eq:oracles_finite_sum}). We use mathematical induction to show that for any $k=0,1,\dots$, the coordinates of the $k$'th iterate produced by such process can be expressed as distributions over $\cP_k$, where $\cP_k$ denotes the set of all real polynomials with degree $\le k$.

As the first iterate $\bw^{(0)}_i$ is allowed to depend only on $L$ and $n$, the base case is trivial. For the inductive step, assume that any coordinate of $\bw^{(k)}_i$ can be expressed as a distribution over ${\cP}_k$. Now, for any $\bw_i^{(k)}$, the oracles answers of 
\begin{align} 
&\text{Generalized first-order oracle:}\nonumber\\
&\qquad O(\bw^{(k)}_i;A,B,\bc,i) = A(Q_{\mu} \bw^{(k)}_i - \bq) + B\bw^{(k)}_i+\bc \nonumber\\
&\text{Steepest coordinate-descent oracle:}\nonumber\\
&\qquad O(\bw^{(k)}_i;j,i) = \circpar{I- (1/(Q_{\mu})_{jj})\be_i(Q_{\mu})_{j,*} }\bw^{(k)}_i - q_j/(Q_{\mu})_{jj}\be_j
\end{align}
form a distribution over $\cP_{k+1}$, as the random quantities involved in the expressions ($A,B,j$ and $i$) do not depend on $\mu$ (due to obliviousness) and the rest of the terms are either constant or linear in $\mu$. Lastly, $\bw_i^{(k+1)}$ are computed by simply summing up all the oracle answers, and as such, form again distributions over $\cP_{k+1}$. 
\end{proof}

\begin{lemma}\label{lemma:smooth_and_unknown_strongly_convex_polynomials}
Let $s(\mu)$ be a real polynomial of degree $\le k$, and let $L>0$. Then, there exists $\delta>0$ such that for any $\mu\in(L-\delta,L)$ it holds that
\begin{align*}
	|s(\mu)\mu+1|\ge(1-\mu/L)^{k+1}.
\end{align*}
\end{lemma}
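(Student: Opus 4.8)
The plan is to analyze the single polynomial $p(\mu)\coloneqq s(\mu)\mu+1$. Since $\deg s\le k$ we have $\deg p\le k+1$, and $p$ is not identically zero because $p(0)=1$. Let $m$ be the multiplicity of $L$ as a root of $p$, so that $p(\mu)=(\mu-L)^m g(\mu)$ for a real polynomial $g$ with $g(L)\neq 0$ and $0\le m\le k+1$ (the case $m=0$ meaning $p(L)\neq 0$). For $\mu<L$ one has $|\mu-L|=L(1-\mu/L)$, hence $|p(\mu)|=L^m(1-\mu/L)^m\,|g(\mu)|$.

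I would then split into two cases. If $m=k+1$, then $\deg p\le k+1$ forces $p(\mu)=c(\mu-L)^{k+1}$ for a constant $c\neq 0$; matching constant terms via $p(0)=1$ gives $c=(-1)^{k+1}/L^{k+1}$, so that $p(\mu)=(1-\mu/L)^{k+1}$ identically and the inequality holds with equality for every $\mu\in(0,L)$. If instead $m\le k$, then by continuity $|g(\mu)|\ge |g(L)|/2>0$ for $\mu$ close enough to $L$, so that
\[ \frac{|p(\mu)|}{(1-\mu/L)^{k+1}}\;\ge\;\frac{L^m|g(L)|}{2}\,(1-\mu/L)^{\,m-(k+1)}. \]
Because $m-(k+1)\le -1<0$ and $1-\mu/L\to 0^+$ as $\mu\to L^-$, the right-hand side diverges to $+\infty$; thus there is $\delta>0$ for which this ratio is at least $1$ on all of $(L-\delta,L)$, which is exactly the claimed bound.

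The point to watch is the choice of $\delta$: it must be small enough that both $|g(\mu)|\ge|g(L)|/2$ holds on $(L-\delta,L)$ and the divergent factor $(1-\mu/L)^{m-(k+1)}$ exceeds $2/(L^m|g(L)|)$ there; since $(1-\mu/L)^{m-(k+1)}$ is monotone in $\mu$ on $(0,L)$, it suffices to enforce the latter at $\mu=L-\delta$, and taking $\delta$ below the minimum of the two resulting positive thresholds works. I do not expect a real obstacle here: the only non-soft ingredient is the constant-term computation in the extremal case $m=k+1$, which is also precisely what shows the inequality is tight and matches the factor $\bigl(1-1/\kappa(F_\mu)\bigr)^{k+1}$ appearing in the application.
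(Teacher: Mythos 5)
Your proof is correct, but it takes a genuinely different route from the paper's. The paper argues by contradiction: after the change of variables $q(\mu)\coloneqq s(L(1-\mu))\,L(1-\mu)+1$, it shows by an induction on coefficients that if the claimed bound failed arbitrarily close to $L$, then $q_0=\cdots=q_k=0$, so $q(\mu)=q_{k+1}\mu^{k+1}$; the normalization $q(1)=1$ then forces $q(\mu)=\mu^{k+1}$, i.e.\ $s(\mu)\mu+1=(1-\mu/L)^{k+1}$ identically, contradicting the assumed strict inequality. You instead work directly with $p(\mu)=s(\mu)\mu+1$, factor out the root at $L$ by its multiplicity $m$, and split on $m$: the case $m=k+1$ pins down the extremal polynomial $(1-\mu/L)^{k+1}$ via the constant-term condition $p(0)=1$, while the case $m\le k$ makes the ratio $|p(\mu)|/(1-\mu/L)^{k+1}$ diverge as $\mu\to L^-$. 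The two arguments encode the same phenomenon --- $p$ has degree $\le k+1$ and $p(0)=1$, so it cannot vanish to order greater than $k+1$ at $L$, and full multiplicity forces the extremal case --- but your factorization makes the root multiplicity the explicit object of the case split, which is somewhat more transparent than the paper's coefficient-killing induction and avoids the contradiction framing; it also exhibits the tight case directly rather than extracting it at the end. Your treatment of the choice of $\delta$ (intersecting the two thresholds and using monotonicity of $(1-\mu/L)^{m-(k+1)}$ on $(0,L)$) is sound.
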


\begin{proof}
Assume for the sake of contradiction that for any $\delta>0$, there exists $\mu\in(L-\delta,L)$ such that 
\begin{align*}
	|s(\mu)\mu+1| <  \left(1-\frac{\mu}{L} \right)^{k+1}.
\end{align*}
Define 
\begin{align} \label{def:q_polynomial}
q(\mu)\coloneqq s\left(L(1-\mu)\right)L(1-\mu) + 1
\end{align}
and denote the corresponding coefficients by $q(\mu)=\sum_{j=0}^{k+1} q_i \mu^j$. 
We show by induction that $q_j=0$ for all $j=0,\dots,k$. For $j=0$ we have that since for any $\delta>0$ there exists some $\hat\mu\in(0, 1- (L-\delta)/L  )$ such that
\begin{align*}
	|q(\hat\mu)|< \left(1-\frac{L(1-\hat\mu)}{L} \right)^{k+1} = \hat\mu^{k+1},
\end{align*}
it holds, by continuity, that 
\begin{align*}
	|q_0|=|q(0)|=\left|\lim_{\mu\to 0^+} q(\mu)\right|&\le \lim_{\mu\to 0^+} \mu^{k+1}=0.
\end{align*}
Now, if $q_0=\dots=q_{m-1}=0$ for $m<k+1$ then 
\begin{align*}
	|q_{m}|=\left|\frac{q(0)}{\mu^{m}}\right|=\left|\lim_{\mu\to 0^+} \frac{q(\mu)}{\mu^{m}}\right|&\le \lim_{t\to 0^+} \mu^{k+1-m}=0.
\end{align*}
Thus, proving the induction claim. This, in turns, implies that $q(\mu)=q_{k+1}\mu^{k+1}$. Now, by \eqref{def:q_polynomial}, it follows that $q_{k+1}=q(1)=1$. Hence, $q(\mu)=\mu^{k+1}$. Lastly, using \eqref{def:q_polynomial} again yields
\begin{align*}
	s(\mu)\mu+1 = q\left(1-\frac{\mu}{L}\right) =\left(1-\frac{\mu}{L}\right)^{k+1},
\end{align*}
which contradicts our assumption, thus concluding the proof.
\end{proof}

\end{document}